\documentclass[11pt,letterpaper,twoside,reqno,nosumlimits]{amsart}
\usepackage{graphicx}
\usepackage{amssymb,bm}
\usepackage{setspace}
\usepackage{colonequals}
\usepackage{qbordermatrix}

\evensidemargin=0in
\oddsidemargin=0in
\textwidth=6.5in
\topmargin=-0.33in
\headheight=0.25in
\textheight=9in

\newcommand{\mat}[1]{\ensuremath{\bm{#1}}}
\renewcommand{\vec}[1]{\ensuremath{\bm{#1}}}
\newcommand{\e}{\ensuremath{\mathrm{e}}}
\newcommand{\E}{\ensuremath{\mathbb{E}}}
\newcommand{\Prob}[1]{\ensuremath{\mathbb{P}\left\{#1\right\}}}

\newcommand{\R}{\ensuremath{\mathbb{R}}}

\newcommand{\norm}[1]{\ensuremath{\big\|#1\big\|}}

\newtheorem{thm}{Theorem}
\newtheorem{prop}{Proposition}
\newtheorem{lemma}{Lemma}
\newtheorem{cor}{Corollary}

\theoremstyle{remark}
\newtheorem{remark}{Remark}
\title[The na\"ive Nystr\"om extension]{The spectral norm error of the na\"ive Nystr\"om extension}

\author{Alex Gittens}
\thanks{Research supported, under the auspice of Joel Tropp, by ONR awards N00014-08-1-0883 and N00014-11-1-0025, AFOSR award FA9550-09-1-0643, and a Sloan Fellowship. In addition to his financial support, the author thanks Joel for fruitful discussions on the interpretation of these error bounds.}

\begin{document}
\begin{abstract}The na\"ive Nystr\"om extension forms a low-rank approximation to a positive-semidefinite matrix by uniformly randomly sampling from its columns. This paper provides the first relative-error bound on the spectral norm error incurred in this process. This bound follows from a natural connection between the Nystr\"om extension and the column subset selection problem. The main tool is a matrix Chernoff bound for sampling without replacement.
\end{abstract}
\maketitle

\section{Introduction}
Nystr\"om extensions are a class of algorithms that quickly form low-rank approximations to positive semidefinite (PSD) matrices by sampling from their columns. We consider the na\"ive Nystr\"om extension, a particular scheme in which the columns are sampled uniformly without replacement. By exploiting a natural connection between Nystr\"om extensions and the column subset selection problem, we find the first relative-error spectral norm guarantees for the na\"ive Nystr\"om extension.

\subsection{Efficacy of the na\"ive Nystr\"om extension}
Perhaps surprisingly, given that one uses no information about the matrix itself to make the column selections, the na\"ive Nystr\"om extension is effective in practice. Because of its data agnosticism and empirical accuracy, the na\"ive Nystr\"om extension is a natural choice for any application where one wishes to avoid the cost of examining (or even constructing) the entire dataset before approximation. 

The na\"ive Nystr\"om extension has proven to be particularly useful in image-processing applications, which typically involve computations with large dense matrices \cite{BCFM04, WDTLG09, BF11}. In spectral image segmentation, for example, one constructs a matrix of pairwise pixel affinities by comparing neighborhoods of each pair of pixels. Several leading eigenvectors of this matrix are then used to segment the image. The affinity matrix of an $N \times N$ image has dimension $N^2 \times N^2,$ so it is challenging to construct and hold the affinity matrix in memory even for images of a moderate size. Similarly, the density and size of the affinity matrix makes it challenging to compute the leading eigenvectors. \cite{BCFM04} proposes using the na\"ive Nystr\"om extension to approximate the eigenvectors of the affinity matrix. Doing so allows one to work with much larger images, because it is only necessary to compute a fraction of the columns of the affinity matrix.  

\subsection{Structure of the Nystr\"om extension} Let $\mat{A}$ be a PSD matrix of size $n.$ Select $\ell \ll n$ columns of $\mat{A}$ to constitute the columns of a matrix $\mat{C}.$ Let $\mat{W}$ be the $\ell \times \ell$ matrix formed by the intersection of the columns in $\mat{C}$ and the corresponding rows in $\mat{A}.$ The matrix $\mat{C} \mat{W}^\dagger \mat{C}^t$ is then a Nystr\"om extension of $\mat{A}$ (see Figure \ref{fig:nystromprocedure}). Here $(\cdot)^\dagger$ denotes Moore-Penrose pseudoinversion. Since $\mat{W}$ is a principal submatrix of $\mat{A},$ it is positive-semidefinite, and hence the Nystr\"om extension is also positive-semidefinite.

\begin{figure}[h]
 \centering
 \includegraphics[scale=.75]{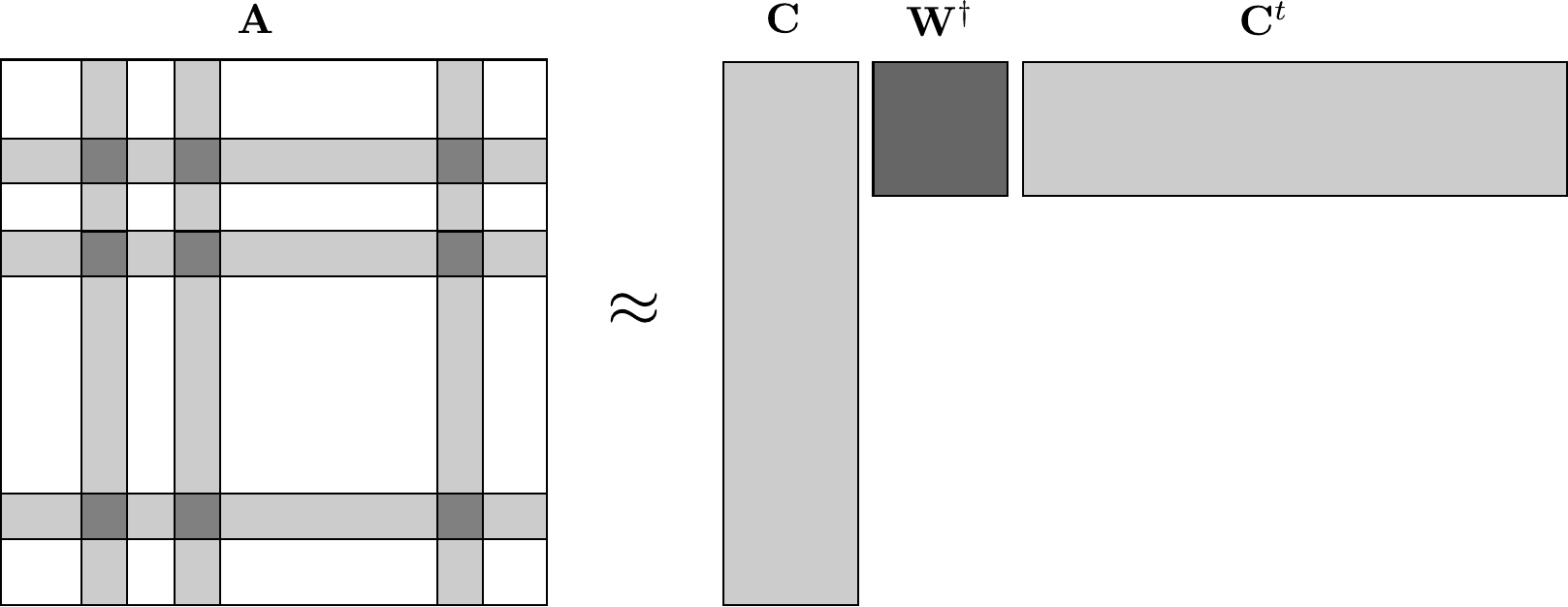}
 \caption{The Nystr\"om extension procedure}
 \label{fig:nystromprocedure}
\end{figure}

The manner in which the columns are sampled and $\mat{W}^\dagger$ is calculated or approximated determines the type of the Nystr\"om extension. Various sampling schemes have been proposed, ranging from the fast and simple na\"ive scheme in which the columns are selected uniformly at random without replacement to more sophisticated and calculation-intensive schemes that involve sampling from a distribution determined by the determinants of principal submatrices of $\mat{A}$ \cite{BW09}. In practice the na\"ive scheme represents a favorable trade-off between speed and accuracy \cite{KMT09}.
 
\subsection{Nystr\"om approximation of invariant subspaces}
In many applications, including the image-processing example taken from \cite{BCFM04}, the Nystr\"om extension is used to obtain approximations to the dominant invariant subspace of a PSD matrix $\mat{A},$ rather than a low-rank approximation \cite{homrighausen11}. 
Through the Davis--Kahan sin$\Theta$ theorem, the spectral norm approximation error provides information on the quality of the approximate invariant subspace obtained via Nystr\"om extensions \cite[Section VII.3]{Bhatia97}. 

To be more precise, let $\tilde{\mat{A}}$ be a Nystr\"om approximation to $\mat{A}$ and assume both $\mat{A}$ and $\tilde{\mat{A}}$ have unique dominant $k$-dimensional invariant subspaces. Let $\mat{U}_k$ and $\tilde{\mat{U}}_k$ have orthogonal columns and span, respectively, the dominant $k$-dimensional invariant subspace of $\mat{A}$ and that of $\tilde{\mat{A}}.$ Recall one natural definition for the distance between the dominant $k$-dimensional invariant subspaces of $\mat{A}$ and $\tilde{\mat{A}},$  
\[
 \text{dist}(\mat{U}_k, \tilde{\mat{U}}_k) = \|\mat{P}_{\mat{U}_k} - \mat{P}_{\tilde{\mat{U}}_k}\|_2.
\]
Denote the $k$th-largest eigenvalue of a matrix $\mat{M}$ by $\lambda_k(\mat{M}),$ so that $\lambda_1(\mat{M}) \geq \lambda_2(\mat{M}) \geq \ldots.$
It follows from the Davis--Kahan sin$\Theta$ theorem that if $\lambda_k(\mat{A}) - \lambda_{k+1}(\tilde{\mat{A}}) > 0,$ then
\begin{equation}
 \text{dist}(\mat{U}_k, \tilde{\mat{U}}_k) \leq \frac{\|\mat{A} - \tilde{\mat{A}}\|_2}{\lambda_k(\mat{A}) - \lambda_{k+1}(\tilde{\mat{A}})}.
\label{eqn:rawdksintheta}
\end{equation}

Assume that we have a relative-error spectral norm bound of the form
\[
 \| \mat{A} - \tilde{\mat{A}} \|_2 \leq \mathrm{C} \lambda_j(\mat{A}),
\]
for some $j \geq k.$ Then equation \eqref{eqn:rawdksintheta} becomes
\[
 \text{dist}(\mat{U}_k, \tilde{\mat{U}}_k) \leq \frac{ \mathrm{C} \lambda_j(\mat{A}) }{\lambda_k(\mat{A}) - \lambda_{k+1}(\mat{A}) - \mathrm{C}\lambda_j(\mat{A})}.
\]
Thus, we conclude that if $\mathrm{C} \lambda_j(\mat{A})$ is sufficiently smaller than the eigengap $\lambda_k(\mat{A}) - \lambda_{k+1}(\mat{A}),$ the Nystr\"om extension yields a quality approximation to the dominant $k$-dimensional invariant subspace of $\mat{A}.$

This paper presents a simple framework for the analysis of Nystr\"om schemes that yields a state-of-the-art spectral norm error bound in the case of the na\"ive Nystr\"om extension scheme. Specifically, it generalizes the coherence-based exact recovery result in \cite{TR10} to also guarantee small relative error in the case of a matrix with a fast-decaying spectrum. This is the first truly relative-error spectral norm bound available for any Nystr\"om extension method. When the eigengap $\lambda_k(\mat{A}) - \lambda_{k+1}(\mat{A})$ is sufficiently large, our result sanctions the use of the na\"ive Nystr\"om extension for the approximation of the dominant $k$-dimensional invariant subspace of $\mat{A}.$

\subsection{Our relative-error spectral norm bound}The efficacy of the na\"ive Nystr\"om extension is of course dependent on the data set to which it is applied. Intuitively, the extension should perform better if the information is spread evenly throughout the columns of the matrix. The \emph{coherence} of the invariant subspaces of $\mat{A}$ provides a quantitative measure of the informativity of the columns. Let $\mathcal{S}$ be a $k$-dimensional subspace of $\R^n$ and $\mat{P}_{\mathcal{S}}$ denote the projection onto $\mathcal{S}.$ Then the coherence of $\mathcal{S}$ is
\[
 \mu_0(\mathcal{S}) = \frac{n}{k} \max\nolimits_i (\mat{P}_{\mathcal{S}})_{ii}.
\]

Corollary \ref{cor:simpleuniformnystromerror} is a condensed version of our main result, Theorem \ref{thm:uniformnystromerror}, and uses the notion of coherence to provide a bound on the error of the na\"ive Nystr\"om extension. 
\begin{cor}
 Let $\mat{A}$ be a real PSD matrix of size $n.$ Given an integer $k \leq n,$ let
$\tau$ denote the coherence of a dominant  $k$-dimensional invariant subspace of $\mat{A}.$
Fix a nonzero failure probability $\delta.$ If $\ell \geq 8 \tau k \log(k/\delta)$ columns of $\mat{A}$ are chosen uniformly at random without replacement, then 
\[
\|\mat{A} - \mat{C} \mat{W}^\dagger \mat{C}^t \|_2 \leq \lambda_{k+1}(\mat{A}) \left(1 + \frac{2n}{\ell} \right) 
\]
with probability exceeding $1-\delta.$
\label{cor:simpleuniformnystromerror}
\end{cor}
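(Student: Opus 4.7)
The plan is to separate the argument into a deterministic structural step and a probabilistic concentration step, corresponding to the two ingredients the introduction advertises: the link to column subset selection and a matrix Chernoff bound for sampling without replacement.

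First, I would recast the Nystr\"om error as a projection residual. Writing $\mat{A}=\mat{A}^{1/2}\mat{A}^{1/2}$ and $\mat{Y}=\mat{A}^{1/2}\mat{S}$, where $\mat{S}$ is the $n\times\ell$ column-selection matrix whose columns are standard basis vectors, one checks directly that $\mat{C}\mat{W}^\dagger\mat{C}^t=\mat{A}^{1/2}\mat{P}_\mat{Y}\mat{A}^{1/2}$, and therefore
\[
\|\mat{A}-\mat{C}\mat{W}^\dagger\mat{C}^t\|_2 = \|(\mat{I}-\mat{P}_\mat{Y})\mat{A}^{1/2}\|_2^2.
\]
Let $\mat{A}=\mat{U}\mat{\Lambda}\mat{U}^t$ be the eigendecomposition, with $\mat{U}=[\mat{U}_k,\mat{U}_\perp]$ and $\mat{\Lambda}=\mathrm{diag}(\mat{\Lambda}_k,\mat{\Lambda}_\perp)$, and set $\mat{\Omega}_1=\mat{U}_k^t\mat{S}$, $\mat{\Omega}_2=\mat{U}_\perp^t\mat{S}$. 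Provided $\mat{\Omega}_1$ has full row rank, the witness $\mat{F}=\mat{\Omega}_1^\dagger\mat{U}_k^t$ produces $\mat{A}^{1/2}\mat{S}\mat{F}=\mat{U}_k\mat{\Lambda}_k^{1/2}\mat{U}_k^t+\mat{U}_\perp\mat{\Lambda}_\perp^{1/2}\mat{\Omega}_2\mat{\Omega}_1^\dagger\mat{U}_k^t$, and a short computation in the spirit of the standard range-finder analysis yields
\[
\|(\mat{I}-\mat{P}_\mat{Y})\mat{A}^{1/2}\|_2^2 \leq \|\mat{\Lambda}_\perp\|_2 + \|\mat{\Lambda}_\perp^{1/2}\mat{\Omega}_2\mat{\Omega}_1^\dagger\|_2^2 \leq \lambda_{k+1}(\mat{A})\bigl(1+\|\mat{\Omega}_2\mat{\Omega}_1^\dagger\|_2^2\bigr),
\]
using $\|\mat{\Omega}_2\|_2\leq\|\mat{S}\|_2=1$. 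The Nystr\"om error is now controlled entirely by $\sigmamin{\mat{\Omega}_1}$.

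Second, I would bound $\sigmamin{\mat{\Omega}_1}$ via a matrix Chernoff inequality applied to the random Gram matrix
\[
\mat{\Omega}_1\mat{\Omega}_1^t = \sum\nolimits_{j=1}^\ell \vec{u}_{i_j}\vec{u}_{i_j}^t,
\]
where $\vec{u}_i\in\R^k$ is the transpose of the $i$-th row of $\mat{U}_k$ and $i_1,\dots,i_\ell$ are drawn uniformly without replacement from $\{1,\dots,n\}$. The coherence hypothesis supplies the uniform bound $\|\vec{u}_i\vec{u}_i^t\|_2=\|\vec{u}_i\|_2^2\leq\tau k/n$, while $\E[\mat{\Omega}_1\mat{\Omega}_1^t]=(\ell/n)\mat{I}_k$ because $\mat{U}_k$ has orthonormal columns. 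Tropp's matrix Chernoff inequality for uniform sampling without replacement, applied with relative deviation $\tfrac{1}{2}$, then gives
\[
\Prob{\lambdamin{\mat{\Omega}_1\mat{\Omega}_1^t} < \tfrac{\ell}{2n}} \leq k\exp\!\Bigl(-\tfrac{\ell}{8\tau k}\Bigr),
\]
and the right-hand side is at most $\delta$ precisely when $\ell\geq 8\tau k\log(k/\delta)$. On this event $\|\mat{\Omega}_1^\dagger\|_2^2\leq 2n/\ell$, so $\|\mat{\Omega}_2\mat{\Omega}_1^\dagger\|_2^2\leq 2n/\ell$, and substituting into the structural bound delivers the claimed inequality $\|\mat{A}-\mat{C}\mat{W}^\dagger\mat{C}^t\|_2\leq\lambda_{k+1}(\mat{A})(1+2n/\ell)$.

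The main obstacle is the probabilistic step. A with-replacement Chernoff bound only tracks independent summands, but the na\"ive Nystr\"om scheme samples without replacement, so getting both the sharp constant $8$ and the failure probability $\delta$ with the stated sample complexity requires the dependent-sampling form of matrix Chernoff; this is the paper's advertised technical core. The structural step, by contrast, is a reasonably routine adaptation of existing range-finder identities, the only subtlety being the need to work with $\mat{A}^{1/2}$ so that left and right singular bases of the ``sketched'' matrix agree with the eigenbasis of $\mat{A}$.
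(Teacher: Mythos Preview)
Your proposal is correct and follows essentially the same route as the paper: the structural step is precisely Theorem~\ref{thm:colselection} (the Nystr\"om error is the squared projection residual of $\mat{A}^{1/2}$, bounded via Proposition~\ref{prop:deterministicerrorbound}), and the probabilistic step is Lemma~\ref{lem:omega1normbound} with the matrix Chernoff bound of Proposition~\ref{prop:chernoffworeplacement}. The only cosmetic difference is that the paper first proves a one-parameter version (Theorem~\ref{thm:uniformnystromerror}, with free $\varepsilon\in(0,1)$ giving $\ell\geq 2\tau k\log(k/\delta)/(1-\varepsilon)^2$ and error $\lambda_{k+1}(\mat{A})(1+n/(\varepsilon\ell))$) and then specializes to $\varepsilon=1/2$, whereas you hard-code $\varepsilon=1/2$ from the outset.
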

For Corollary \ref{cor:simpleuniformnystromerror} to provide a meaningful estimate of $\ell,$ the required number of column samples, the coherence $\tau$ must be small enough that $\ell \ll n.$ This requirement reflects our intuition that approximations formed using a small number of columns will not be accurate if a small number of columns are significantly more influential than the others. In the best-case scenario of $\tau=1,$ the columns are equally informative and we find that the error of a na\"ive Nystr\"om extension formed using just $\mathrm{C} k \log k$ columns is close to that of the optimal rank-$k$ approximation.

\subsection{Relevant literature}

We briefly review the literature on Nystr\"om extensions, focusing on the na\"ive Nystr\"om scheme. In this section $\mat{A}$ is a PSD matrix, $\mat{A}_k$ is a rank-$k$ approximation to $\mat{A}$ that is optimal in the spectral norm, and $\ell$ is the number of columns used to construct a Nystr\"om extension of $\mat{A}.$

Williams and Seeger introduce the Nystr\"om extension in \cite{SW01}, based upon a similar method used in numerical integral equation solvers, as a heuristic method for efficiently approximating the eigendecomposition of kernel matrices. In this seminal work, only an empirical analysis of the approximation error is offered. Drineas and Mahoney provide the first rigorous analysis of a Nystr\"om extension in \cite{DM05}; in the scheme they consider, columns are sampled with probability proportional to the square of the diagonal entries of $\mat{A}$. In addition to probabilistic schemes, many adaptive sampling schemes have been proposed. These attempt to progressively choose the columns to decrease the approximation error. For an introduction to this body of literature, we refer the interested reader to the discussion in \cite{FGK11}.

Kumar, Mohri, and Talwalkar attempt the first analysis of the na\"ive Nystr\"om extension in \cite{KMT09}, resulting in bounds for the Frobenius norm error. Their analysis proceeds by bounding the expectation and variance of the error then applying a concentration of measure argument. A simplified yet representative statement of their bound is that 
\[
 \|\mat{A} - \mat{C}\mat{W}^\dagger\mat{C}^t \|_F \leq \|\mat{A} - \mat{A}_k\|_F + \varepsilon n \cdot \max_i {(\mat{A})_{ii}}
\]
with constant probability when $\ell \geq Ck/\varepsilon^4.$ 

In \cite{KMT09a}, Kumar, Mohri, and Talwalkar establish that if $\text{rank}(\mat{W})= \text{rank}(\mat{A}) = r,$ then $\mat{A} = \mat{C}\mat{W}^\dagger \mat{C}^t.$ Talwalkar and Rostamizadeh prove this implies that, if $\ell \geq \mathrm{C} r \mu \log(r/\delta)$, na\"ive Nystr\"om extension results in exact recovery with constant probability \cite{TR10}. Here $\mu$ is a measure of the coherence of the column space of $\mat{A}$ that differs slightly from the definition used in this paper. Their key observation is that if no columns of $\mat{A}$ are singularly influential, then $\mat{W}$ will have maximal rank when $\ell$ is slightly larger than the rank of $\mat{A}$. Thus, the number of samples required for exact recovery is determined by the rank of $\mat{A}$ and the coherence, $\mu,$ of its range space. They use a standard result from the compressed sensing literature to quantify this phenomenon and obtain an estimate for $\ell$ \cite{CR07}.

  In \cite{KLL10}, Kwok, Li, and Lu propose replacing $\mat{W}$ with a low-rank approximation $\tilde{\mat{W}}$ to facilitate the pseudoinversion operation. This large-scale variant of the na\"ive Nystr\"om extension allows a larger number of column samples to be drawn and leads to smaller empirical approximation errors. The approximation $\tilde{\mat{W}}$ is constructed using the randomized methodology espoused in \cite{HMT11}. The analysis of the error combines bounds provided in \cite{HMT11} with a matrix sparsification argument. In addition to $\ell$ and $k$, the Nystr\"om algorithm presented in \cite{KLL10} depends on two additional parameters that control the creation of $\tilde{\mat{W}}$: an oversampling factor $p$ and the number of iterations of the power method, respectively $p$ and $q.$ 
After taking $p = \ell -k$ and $q=1,$ the results of \cite{KLL10} provide error bounds for the na\"ive Nystr\"om extension:
\begin{align}
 \E\| \mat{A} - \mat{C}\mat{W}^\dagger \mat{C}^t\|_2 & \leq \mathrm{C} \left(\|\mat{A} - \mat{A}_k\|_2 + \frac{n}{\sqrt{l}} \max_i (\mat{A})_{ii} \right) \label{eqn:kll10spectralbound}\\
 \E\| \mat{A} - \mat{C}\mat{W}^\dagger \mat{C}^t\|_F & \leq \mathrm{C} \sqrt{\ell} \left( \|\mat{A} - \mat{A}_k\|_F + \frac{n}{\sqrt{\ell}} \max_i (\mat{A})_{ii} \right). \notag
\end{align}
 Of the works mentioned, only \cite{KLL10} provides a bound on the spectral error of the Nystr\"om method for $\mat{A}$ of arbitrary rank. Unfortunately, the quantity $\max_i (\mat{A})_{ii}$ is, for a general $\mat{A} \succeq 0,$ bounded only by $\lambda_1(\mat{A}).$ Thus equation \eqref{eqn:kll10spectralbound} does not provide a relative-error bound. In fact, the spectral norm error bound provided in this paper is always tighter than the bound provided in \cite{KLL10}, for any choice of $p$ and $q.$ 
 
Our work presents an intuitive and simple approach to the analysis of Nystr\"om extensions through their connection to the randomized column subset selection problem. This allows us to obtain the first truly relative-error guarantee on the spectral norm error. This paper analyzes the na\"ive sampling scheme but we believe that the framework given is flexible enough to be fruitfully applied to the analysis of other Nystr\"om extension schemes including, in particular, the large-scale variant introduced in \cite{KLL10}.

\subsection{Outline} In Section \ref{sec:notation} we introduce our notation and review some algebraic preliminaries. In Section \ref{sec:colselection} we establish a connection between the Nystr\"om extension procedure and the column subset selection problem. We exploit this connection and a result from \cite{HMT11} to provide a general error bound for any Nystr\"om extension scheme. In Section \ref{sec:naiveproof} we specialize this result to the case of the na\"ive Nystr\"om extension.

\section{Notation}
\label{sec:notation}

We work exclusively with real matrices and order the eigenvalues of a PSD matrix $\mat{A}$ so that $\lambda_1(\mat{A}) \geq \lambda_2(\mat{A}) \geq \cdots \geq \lambda_n(\mat{A}).$ Each PSD matrix $\mat{A}$ has a unique square root $\mat{A}^{1/2}$ that is also positive-semidefinite, has the same eigenspaces as $\mat{A},$ and satisfies $\mat{A} = \big(\mat{A}^{1/2}\big)^2.$

The projection onto the column space of a matrix $\mat{M}$ is written $\mat{P}_{\mat{M}}$ and satisfies
\[
\mat{P}_{\mat{M}} = \mat{M}\mat{M}^\dagger = \mat{M} (\mat{M}^t \mat{M})^\dagger \mat{M}^t.
\]
The notation $(\vec{x})_j$ refers to the $j$th entry of the vector $\vec{x},$ and $(\mat{M})_i$ refers to the $i$th column of the matrix $\mat{M}.$ Likewise, $(\mat{M})_{ij}$ refers to the $(i,j)$ entry of $\mat{M}.$

The \emph{coherence} of a matrix $\mat{U} \in \R^{n\times k}$ with orthonormal columns is, up to a scaling factor, the maximum of the squared Euclidean norms of its rows:
\[
 \mu_0(\mat{U}) \colonequals \frac{n}{k} \max\nolimits_i \|(\mat{U}^t)_i\|^2 = \frac{n}{k} \max\nolimits_i (\mat{U}\mat{U}^t)_{ii} = \frac{n}{k} \max\nolimits_i (\mat{P}_{\mat{U}})_{ii}.
\]
From the last equality, we see that the coherence is in fact an intrinsic property of the subspace spanned by $\mat{U}.$ Thus we refer to the coherence of a subspace without first choosing a particular orthogonal basis $\mat{U}.$

\section{The connection to the column subset selection problem}
\label{sec:colselection}

In this section we establish a fruitful connection between the performance of the Nystr\"om extension and the performance of randomized \emph{column subset selection}.

Given a matrix $\mat{M}$, the goal of column selection is to choose a small but informative subset $\mat{C}$ of the columns of $\mat{M}$ so that, after approximating $\mat{M}$ with the matrix obtained by projecting $\mat{M}$ onto the span of $\mat{C},$ the residual $(\mat{I} - \mat{P}_{\mat{C}})\mat{M}$ is small in some norm. In randomized column subset selection, the columns $\mat{C}$ are choosen randomly, either uniformly or according to some data-dependent distribution. Column subset selection has important applications in statistical data analysis and has been investigated by both the numerical linear algebra and the theoretical computer science communities. For an introduction to the column subset selection literature, biased towards approaches involving randomization, we refer the interested reader to the surveys \cite{MM10,MM11}.

Our first theorem establishes that the Nystr\"om extension of $\mat{A}$ is intimately related to the randomized column subset selection problem for $\mat{A}^{1/2}.$
We model the column sampling operation as follows: let $\mat{S}$ be a random matrix with $\ell$ columns, each of which has exactly one nonzero element. Then right multiplication by $\mat{S}$ selects  $\ell$ columns from $\mat{A}$:
\begin{equation*}
  \mat{C} = \mat{A} \mat{S} \quad \text{ and } \quad \mat{W} = \mat{S}^t \mat{A} \mat{S}.
\end{equation*}
The distribution of $\mat{S}$ reflects the type of sampling being performed. In the case of the na\"ive Nystr\"om extension,  $\mat{S}$ is distributed as the first $\ell$ columns of a uniformly random permutation matrix. 

We use the following partitioning of the eigenvalue decomposition of $\mat{A}$ to state our results:
\begin{equation}
\mat{A} = \bordermatrix[{[}{]}]{%
&^k \vspace{-0.75ex} & \!\!^{n-k}  \hspace{1ex}\\
& \vspace{0.25ex} \mat{U}_1 \hspace{-2ex} & \mat{U}_2 
}
\bordermatrix[{[}{]}]{%
& \vspace{-0.75ex} ^k &\!\!^{n-k} \hspace{1ex}\\
& \mat{\Sigma}_1 & \\
& \vspace{0.5ex} & \mat{\Sigma}_2 
}
\bordermatrix*[{[}{]}]{%
\mat{U}_1^t \!\! & \\
\vspace{-1ex} \mat{U}_2^t \!\! & \\
 \vspace{-1ex} &
}
\label{eqn:svdpartition}
\end{equation}

The columns of $\mat{U}_1$ and $\mat{U}_2$ respectively span a dominant $k$-dimensional invariant subspace of $\mat{A}$ and the corresponding bottom $(n-k)$-dimensional invariant subspace of $\mat{A}.$ The interaction of the column sampling matrix $\mat{S}$ with the invariant subspaces spanned by $\mat{U}_1$ and $\mat{U}_2$ is captured by the matrices
\begin{equation}
 \mat{\Omega}_1 = \mat{U}_1^t \mat{S}, \quad \mat{\Omega}_2 = \mat{U}_2^t \mat{S}.
\label{eqn:sampleprojections}
\end{equation}

\begin{thm}
Let $\mat{A}$ be a PSD matrix of size $n$ and let $\mat{S}$ be an $n \times \ell$ matrix. Partition $\mat{A}$ as in equation \eqref{eqn:svdpartition} and define $\mat{\Omega}_1$ and $\mat{\Omega}_2$ as in equation \eqref{eqn:sampleprojections}.

Assume $\mat{\Omega}_1$ has full row rank. Then the spectral approximation error of the Nystr\"om extension of $\mat{A}$ using $\mat{S}$ as the column sampling matrix satisfies
\begin{equation}
\|\mat{A} - \mat{C}\mat{W}^\dagger \mat{C}^t\|_2 = \|\big(\mat{I} - \mat{P}_{\mat{A}^{1/2} \mat{S}}\big)\mat{A}^{1/2}\|_2^2 \leq \norm{\mat{\Sigma}_2}_2 \left( 1 + \|\mat{\Omega}_2 \mat{\Omega}_1^\dagger\|_2^2\right).
  \label{eqn:spectralnystromerrbound}
\end{equation}
\label{thm:colselection}
\end{thm}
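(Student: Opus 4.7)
The proof naturally splits into an algebraic identity (the equality in \eqref{eqn:spectralnystromerrbound}) and the bound (the inequality in \eqref{eqn:spectralnystromerrbound}), the latter of which will follow by applying a structural result of Halko--Martinsson--Tropp to the matrix $\mat{A}^{1/2}$.

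For the equality, I would substitute $\mat{C} = \mat{A}\mat{S}$ and $\mat{W} = \mat{S}^t \mat{A} \mat{S}$ into the Nystr\"om approximation and factor out $\mat{A}^{1/2}$ on each side:
\[
\mat{C}\mat{W}^\dagger \mat{C}^t = \mat{A}^{1/2}\bigl[\mat{A}^{1/2}\mat{S}\bigl((\mat{A}^{1/2}\mat{S})^t (\mat{A}^{1/2}\mat{S})\bigr)^\dagger (\mat{A}^{1/2}\mat{S})^t\bigr]\mat{A}^{1/2} = \mat{A}^{1/2}\mat{P}_{\mat{A}^{1/2}\mat{S}}\mat{A}^{1/2}.
\]
Therefore $\mat{A} - \mat{C}\mat{W}^\dagger \mat{C}^t = \mat{A}^{1/2}(\mat{I} - \mat{P}_{\mat{A}^{1/2}\mat{S}})\mat{A}^{1/2}$. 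Since $\mat{I} - \mat{P}_{\mat{A}^{1/2}\mat{S}}$ is an orthogonal projector, it is idempotent and self-adjoint, so this residual equals $\bigl[(\mat{I} - \mat{P}_{\mat{A}^{1/2}\mat{S}})\mat{A}^{1/2}\bigr]^t\bigl[(\mat{I} - \mat{P}_{\mat{A}^{1/2}\mat{S}})\mat{A}^{1/2}\bigr]$. Taking spectral norms and using the identity $\|\mat{M}^t\mat{M}\|_2 = \|\mat{M}\|_2^2$ yields the claimed equality.

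For the inequality, I would invoke the deterministic structural bound from \cite{HMT11} (their Theorem 9.1), which controls the projection error of a matrix $\mat{M}$ onto the range of $\mat{M}\mat{S}$ in terms of how $\mat{S}$ interacts with the singular subspaces of $\mat{M}$. I apply this to $\mat{M} = \mat{A}^{1/2}$: since $\mat{A}^{1/2}$ is PSD, its singular value decomposition is $\mat{U}\,\mathrm{diag}(\mat{\Sigma}_1^{1/2},\mat{\Sigma}_2^{1/2})\,\mat{U}^t$, with the same partition of right singular vectors as in \eqref{eqn:svdpartition}, so the test matrix projections are exactly the $\mat{\Omega}_1,\mat{\Omega}_2$ defined in \eqref{eqn:sampleprojections}. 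Because $\mat{\Omega}_1$ has full row rank by hypothesis, the structural bound gives
\[
\|(\mat{I} - \mat{P}_{\mat{A}^{1/2}\mat{S}})\mat{A}^{1/2}\|_2^2 \leq \|\mat{\Sigma}_2^{1/2}\|_2^2 + \|\mat{\Sigma}_2^{1/2}\mat{\Omega}_2\mat{\Omega}_1^\dagger\|_2^2.
\]

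Finally, I would peel off $\mat{\Sigma}_2^{1/2}$ in the second term by submultiplicativity: $\|\mat{\Sigma}_2^{1/2}\mat{\Omega}_2\mat{\Omega}_1^\dagger\|_2^2 \leq \|\mat{\Sigma}_2^{1/2}\|_2^2 \|\mat{\Omega}_2\mat{\Omega}_1^\dagger\|_2^2$, and observe that $\|\mat{\Sigma}_2^{1/2}\|_2^2 = \|\mat{\Sigma}_2\|_2$. Combining this with the displayed equality completes the proof. The main obstacle is really only conceptual---recognizing that the squared-root trick converts the Nystr\"om error into a column-subset-selection residual for $\mat{A}^{1/2}$ so that the HMT structural lemma becomes directly applicable; once this link is made, the remaining steps are routine manipulations with projectors and norms.
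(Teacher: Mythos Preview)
Your proof is correct and follows essentially the same route as the paper: factor the Nystr\"om approximation as $\mat{A}^{1/2}\mat{P}_{\mat{A}^{1/2}\mat{S}}\mat{A}^{1/2}$, use idempotency of the projector together with $\|\mat{M}^t\mat{M}\|_2=\|\mat{M}\|_2^2$ to obtain the equality, and then apply the HMT structural bound (Proposition~\ref{prop:deterministicerrorbound}) with $\mat{M}=\mat{A}^{1/2}$. You even spell out the final submultiplicativity step and the identification $\|\mat{\Sigma}_2^{1/2}\|_2^2=\|\mat{\Sigma}_2\|_2$ that the paper leaves implicit in its ``follows immediately.''
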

Prior analyses of Nystr\"om extensions have used the Cholesky decomposition of $\mat{A}.$ By instead using the square-root, Theorem \ref{thm:colselection} establishes an equivalence between the column subset selection problem and the Nystr\"om extension procedure and gives a deterministic relative-error bound on the performance of Nystr\"om extensions. 

To establish Theorem \ref{thm:colselection}, we use the following bound on the error incurred by projecting a matrix onto a random subspace of its range (\cite[Theorem 9.1]{HMT11}).

\begin{prop}
 Let $\mat{M}$ be a PSD matrix of size $n$. Fix integers $k$ and $\ell$ satisfying $1 \leq k \leq l \leq n.$ 

Let $\mat{U}_1$ and $\mat{U}_2$ be matrices with orthogonal columns spanning, respectively, a dominant $k$-dimensional invariant subspace of $\mat{M}$ and the corresponding bottom $(n-k)$-dimensional invariant subspace of $\mat{M}.$ Let $\mat{\Sigma}_2$ be the diagonal matrix of eigenvalues corresponding to the bottom $(n-k)$-dimensional invariant subspace of $\mat{M}.$ 

Given a matrix $\mat{S}$ of size $n \times \ell$, define $\mat{\Omega_1} = \mat{U}_1^t \mat{S}$ and $\mat{\Omega_2} = \mat{U}_2^t \mat{S}.$ Then, assuming that $\mat{\Omega}_1$ has full row rank,
\[
\| (\mat{I} - \mat{P}_{\mat{M}\mat{S}}) \mat{M} \|_2^2 \leq \| \mat{\Sigma}_2 \|_2^2 + \big\| \mat{\Sigma}_2 \mat{\Omega}_2 \mat{\Omega}_1^\dagger \big \|_2^2.
\]
 \label{prop:deterministicerrorbound}
\end{prop}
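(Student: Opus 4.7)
The proof plan hinges on constructing an auxiliary matrix $\mat{Z}$ whose columns lie in the range of $\mat{M}\mat{S}$ and which captures $\mat{M}$ exactly on the dominant invariant subspace. Since $\mat{I} - \mat{P}_{\mat{M}\mat{S}}$ annihilates anything in the range of $\mat{M}\mat{S}$, the identity
\[
(\mat{I} - \mat{P}_{\mat{M}\mat{S}})\mat{M} = (\mat{I} - \mat{P}_{\mat{M}\mat{S}})(\mat{M} - \mat{Z})
\]
combined with the fact that orthogonal projections are contractive in spectral norm reduces the problem to bounding $\|\mat{M} - \mat{Z}\|_2$.

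First I would choose $\mat{Z} = \mat{M}\mat{S}\mat{\Omega}_1^\dagger \mat{U}_1^t$, which manifestly lies in the column space of $\mat{M}\mat{S}$. Expanding via $\mat{M}\mat{S} = \mat{U}_1\mat{\Sigma}_1\mat{\Omega}_1 + \mat{U}_2\mat{\Sigma}_2\mat{\Omega}_2$ and invoking the full-row-rank hypothesis (which yields $\mat{\Omega}_1\mat{\Omega}_1^\dagger = \mat{I}$), a short calculation gives
\[
\mat{M} - \mat{Z} = \mat{U}_2 \mat{\Sigma}_2 \mat{U}_2^t - \mat{U}_2 \mat{\Sigma}_2 \mat{\Omega}_2 \mat{\Omega}_1^\dagger \mat{U}_1^t.
\]

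Next I would factor this residual to expose the structure of its spectral norm,
\[
\mat{M} - \mat{Z} = \mat{U}_2 \mat{\Sigma}_2 \,\bigl[-\mat{\Omega}_2 \mat{\Omega}_1^\dagger, \, \mat{I}\bigr] \,[\mat{U}_1, \mat{U}_2]^t.
\]
Because $\mat{U}_2$ has orthonormal columns and $[\mat{U}_1, \mat{U}_2]$ is orthogonal, both the left and right outer factors are spectral-norm isometries and may be stripped, leaving
\[
\|\mat{M} - \mat{Z}\|_2^2 = \bigl\|\mat{\Sigma}_2 \bigl[-\mat{\Omega}_2 \mat{\Omega}_1^\dagger,\, \mat{I}\bigr]\bigr\|_2^2.
\]
This block-row spectral norm can then be controlled via the identity $\|[\mat{A},\mat{B}]\|_2^2 = \|\mat{A}\mat{A}^t + \mat{B}\mat{B}^t\|_2 \leq \|\mat{A}\|_2^2 + \|\mat{B}\|_2^2$, which yields exactly $\|\mat{\Sigma}_2 \mat{\Omega}_2 \mat{\Omega}_1^\dagger\|_2^2 + \|\mat{\Sigma}_2\|_2^2$, finishing the proof.

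The main conceptual obstacle is identifying the auxiliary matrix $\mat{Z}$: one needs it to lie in the range of $\mat{M}\mat{S}$ while simultaneously cancelling the $\mat{U}_1\mat{\Sigma}_1\mat{U}_1^t$ portion of $\mat{M}$ exactly, and this is possible only because the full-row-rank hypothesis on $\mat{\Omega}_1$ lets $\mat{\Omega}_1^\dagger$ serve as a right inverse. Once the choice $\mat{Z} = \mat{M}\mat{S}\mat{\Omega}_1^\dagger \mat{U}_1^t$ is in hand, the remaining arithmetic is routine linear algebra.
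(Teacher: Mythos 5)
The paper offers no proof of this proposition: it is imported wholesale as Theorem 9.1 of \cite{HMT11}. Your argument is correct, and it takes a genuinely different (and lighter) route than the proof in that reference. There, one also builds an auxiliary matrix whose range lies inside the range of $\mat{M}\mat{S}$, but the error is then controlled by comparing projectors, $\mat{P}_{\mat{Z}} \preceq \mat{P}_{\mat{M}\mat{S}}$, computing $\mat{I}-\mat{P}_{\mat{Z}}$ explicitly in a rotated basis, and invoking semidefinite domination lemmas; that construction involves $\mat{\Sigma}_1^{-1}$ and so needs a separate reduction to the case of nonsingular $\mat{\Sigma}_1$. You instead use the quasi-optimality of orthogonal projection: since $(\mat{I}-\mat{P}_{\mat{M}\mat{S}})\mat{Z}=\mat{0}$ for $\mat{Z}=\mat{M}\mat{S}\mat{\Omega}_1^\dagger\mat{U}_1^t$, you may replace $\mat{M}$ by $\mat{M}-\mat{Z}$ and discard the (contractive) projector, and the full-row-rank hypothesis gives $\mat{\Omega}_1\mat{\Omega}_1^\dagger=\mat{I}$ so that $\mat{Z}$ reproduces the $\mat{U}_1\mat{\Sigma}_1\mat{U}_1^t$ block exactly, leaving the residual $\mat{U}_2\mat{\Sigma}_2\mat{U}_2^t - \mat{U}_2\mat{\Sigma}_2\mat{\Omega}_2\mat{\Omega}_1^\dagger\mat{U}_1^t$. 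This factors through isometries, and the block-row estimate $\big\|[\mat{A}\ \ \mat{B}]\big\|_2^2 \leq \|\mat{A}\|_2^2 + \|\mat{B}\|_2^2$ delivers exactly the stated bound. What your approach buys is a short, self-contained proof that never inverts $\mat{\Sigma}_1$ and avoids any projector computation; what it gives up is the finer semidefinite information on $\mat{M}(\mat{I}-\mat{P}_{\mat{M}\mat{S}})\mat{M}$ that the projector analysis of \cite{HMT11} provides, though for the spectral-norm conclusion actually asserted in the proposition nothing is lost.
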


\begin{proof}[Proof of Theorem \ref{thm:colselection}]
We write the Nystr\"om extension in terms of the square root of $\mat{A}$ and a projection onto the space spanned by $\mat{A}^{1/2} \mat{S}:$
\begin{align*}
  \mat{C} \mat{W}^\dagger \mat{C}^t & = \mat{A} \mat{S} (\mat{S}^t \mat{A} \mat{S})^\dagger \mat{S}^t \mat{A} \\
  & = \mat{A}^{1/2} [\mat{A}^{1/2} \mat{S} (\mat{S}^t \mat{A}^{1/2} \mat{A}^{1/2} \mat{S})^\dagger \mat{S}^t \mat{A}^{1/2}] \mat{A}^{1/2} \\
  & = \mat{A}^{1/2} \mat{P}_{\mat{A}^{1/2} \mat{S}} \mat{A}^{1/2}.
\end{align*}
It follows that the spectral error of the Nystr\"om extension satisfies
\begin{align*}
  \norm{\mat{A} - \mat{C} \mat{W}^\dagger \mat{C}^t}_2 & = \norm{\mat{A}^{1/2} ( \mat{I} - \mat{P}_{\mat{A}^{1/2}\mat{S}}) \mat{A}^{1/2}}_2  = \norm{\mat{A}^{1/2} ( \mat{I} - \mat{P}_{\mat{A}^{1/2}\mat{S}})^2 \mat{A}^{1/2}}_2\\ 
 & = \norm{( \mat{I} - \mat{P}_{\mat{A}^{1/2}\mat{S}}) \mat{A}^{1/2}}_2^2.
\end{align*}
The second equality holds because of the idempotency of projections. The third follows from the fact that $\|\mat{A}\mat{A}^t\|_2 = \|\mat{A}\|_2^2$ for any matrix $\mat{A}.$ Partition $\mat{A}$ as in equation \eqref{eqn:svdpartition}. Equation \eqref{eqn:spectralnystromerrbound} now follows immediately from Proposition \ref{prop:deterministicerrorbound} with $\mat{M} = \mat{A}^{1/2}.$
\end{proof}

\section{ Error bounds for na\"ive Nystr\"om extension}
\label{sec:naiveproof}

In this section, we provide a bound on the spectral norm approximation error of na\"ive Nystr\"om extensions. For convenience, we recall the following partitioning of the eigenvalue decomposition of a PSD matrix of size $n$:
\begin{equation}
 \mat{A} = \bordermatrix[{[}{]}]{%
&^k \vspace{-0.75ex} & \!\!^{n-k}  \hspace{1ex}\\
& \vspace{0.25ex} \mat{U}_1 \hspace{-2ex} & \mat{U}_2 
}
\bordermatrix[{[}{]}]{%
& \vspace{-0.75ex} ^k &\!\!^{n-k} \hspace{1ex}\\
& \mat{\Sigma}_1 & \\
& \vspace{0.5ex} & \mat{\Sigma}_2 
}
\bordermatrix*[{[}{]}]{%
\mat{U}_1^t \!\! & \\
\vspace{-1ex} \mat{U}_2^t \!\! & \\
 \vspace{-1ex} &
},
\label{eqn:svdpartition2}
\end{equation}
where the columns of $\mat{U}_1$ and $\mat{U}_2$ respectively span a dominant $k$-dimensional invariant subspace of $\mat{A}$ and the corresponding bottom $(n-k)$-dimensional invariant subspace of $\mat{A}.$ We also recall the matrices 
\begin{equation}
 \mat{\Omega}_1 = \mat{U}_1^t \mat{S}, \quad \mat{\Omega}_2 = \mat{U}_2^t \mat{S}
\label{eqn:sampleprojections2}
\end{equation}
that capture the interaction of the column sampling operation with the invariant subspaces of $\mat{A}.$
Theorem \ref{thm:uniformnystromerror} establishes that, if the spectrum of $\mat{A}$ decays sufficiently and an appropriate number of columns are sampled, then the error incurred by the na\"ive Nystr\"om extension process is small.

\begin{thm}
 Let $\mat{A}$ be a PSD matrix of size $n.$ Given an integer $k \leq n,$ partition $\mat{A}$ as in equation \eqref{eqn:svdpartition2}. Let
$\tau$ denote the coherence of $\mat{U}_1,$
\[
 \tau = \mu_0(\mat{U}_1) .
\]

Fix a failure probability $\delta \in (0,1).$ For any $\varepsilon \in (0,1),$ if
\[
 \ell \geq \frac{2 \tau k \log\left(\frac{k}{\delta}\right)}{(1-\varepsilon)^2}
\]
columns of $\mat{A}$ are chosen uniformly at random and used to form a Nystr\"om extension, the spectral norm error of the approximation satisfies
\[
\|\mat{A} - \mat{C} \mat{W}^\dagger \mat{C}^t\|_2 \leq \lambda_{k+1}(\mat{A}) \left(1 + \frac{ n}{\varepsilon \ell} \right) 
\]
with probability exceeding $1-\delta.$
\label{thm:uniformnystromerror}
\end{thm}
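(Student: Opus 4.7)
The plan is to invoke Theorem \ref{thm:colselection} and reduce matters to bounding the random quantity $\|\mat{\Omega}_2 \mat{\Omega}_1^\dagger\|_2^2$. I split it via submultiplicativity as
\[
\|\mat{\Omega}_2 \mat{\Omega}_1^\dagger\|_2 \leq \frac{\|\mat{\Omega}_2\|_2}{\sigma_{\min}(\mat{\Omega}_1)},
\]
handle the numerator deterministically, and handle the denominator via a matrix Chernoff inequality for sampling without replacement.

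For the numerator, the key observation is that without-replacement sampling makes the columns of $\mat{S}$ distinct standard basis vectors, so $\mat{S}\mat{S}^t$ is a diagonal $0$/$1$ matrix satisfying $\mat{S}\mat{S}^t \preceq \mat{I}_n$. Consequently
\[
\mat{\Omega}_2 \mat{\Omega}_2^t = \mat{U}_2^t \mat{S}\mat{S}^t \mat{U}_2 \preceq \mat{U}_2^t \mat{U}_2 = \mat{I}_{n-k},
\]
so $\|\mat{\Omega}_2\|_2 \leq 1$.

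For the denominator, I expand
\[
\mat{\Omega}_1 \mat{\Omega}_1^t \;=\; \sum_{i \in I} \vec{u}_i \vec{u}_i^t,
\]
where $I \subset \{1,\ldots,n\}$ is the uniformly random size-$\ell$ subset of sampled indices and $\vec{u}_i \in \R^k$ is the $i$th row of $\mat{U}_1$ viewed as a column. The coherence hypothesis supplies the per-summand bound $\|\vec{u}_i\|^2 = (\mat{U}_1 \mat{U}_1^t)_{ii} \leq \tau k /n$, while symmetry gives $\E[\mat{\Omega}_1 \mat{\Omega}_1^t] = (\ell/n)\mat{U}_1^t \mat{U}_1 = (\ell/n)\mat{I}_k$. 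A lower-tail matrix Chernoff inequality with deviation parameter $\varepsilon \in (0,1)$ should then yield
\[
\Prob{\lambda_{\min}(\mat{\Omega}_1 \mat{\Omega}_1^t) \leq \varepsilon\ell/n} \leq k \exp\!\bigl(-(1-\varepsilon)^2 \ell/(2 \tau k)\bigr),
\]
and the sample-size hypothesis $\ell \geq 2 \tau k \log(k/\delta)/(1-\varepsilon)^2$ is precisely what makes the right-hand side at most $\delta$. On the complementary event $\mat{\Omega}_1$ has full row rank and $\sigma_{\min}(\mat{\Omega}_1)^{-2} \leq n/(\varepsilon\ell)$, so combining the two pieces yields $\|\mat{\Omega}_2 \mat{\Omega}_1^\dagger\|_2^2 \leq n/(\varepsilon\ell)$ with probability at least $1-\delta$. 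Feeding this into Theorem \ref{thm:colselection} and using $\|\mat{\Sigma}_2\|_2 = \lambda_{k+1}(\mat{A})$ delivers the claimed bound.

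The main obstacle is justifying the matrix Chernoff inequality in the without-replacement regime: the summands $\vec{u}_i \vec{u}_i^t$ for $i \in I$ are not independent, so Tropp's standard matrix Chernoff for independent PSD summands cannot be quoted verbatim. The abstract advertises this as the paper's key tool, and I would supply it by exploiting negative association of the sampling indicators $\indicator{i \in I}$, which is enough to preserve the trace-Laplace-transform subadditivity that drives the usual matrix Chernoff proof; alternatively, one can cite a packaged without-replacement version directly. Once that inequality is in hand, the rest of the argument is routine linear algebra.
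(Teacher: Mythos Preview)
Your proposal is correct and follows essentially the same route as the paper: the paper also splits $\|\mat{\Omega}_2\mat{\Omega}_1^\dagger\|_2 \le \|\mat{\Omega}_2\|_2\|\mat{\Omega}_1^\dagger\|_2$, bounds $\|\mat{\Omega}_2\|_2\le 1$ deterministically (via $\|\mat{U}_2\|_2\|\mat{S}\|_2\le 1$, your $\mat{S}\mat{S}^t\preceq\mat{I}$ argument being an equivalent phrasing), and controls $\lambda_k(\mat{\Omega}_1\mat{\Omega}_1^t)$ with exactly the matrix Chernoff bound you wrote, which it states separately as a lemma and sources from Tropp's without-replacement Chernoff result rather than deriving it via negative association.
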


\begin{remark}
The coherence of $\mat{U}_1$ is a measure of how much comparative influence the individual columns of $\mat{A}$ have over the dominant $k$-dimensional invariant subspace of $\mat{A}$ spanned by $\mat{U}_1$: if $\tau$ is small, then all columns have essentially the same influence; if $\tau$ is large, then it is possible that there is a single column in $\mat{A}$ which alone determines one of the top $k$ eigenvectors of $\mat{A}.$

For illustrative purposes, we point out that the coherence of a random $k \times n$ orthogonal matrix, i.e. a matrix distributed uniformly on the Stiefel manifold, is $\mathrm{O}(\max(k, \log n)/k)$ with high probability \cite{CR09}. The coherence of an arbitrary $\mat{U}_1$ is no smaller than $1,$ and may be as large as $\frac{n}{k}.$ 
\end{remark}

\begin{remark}
 Theorem \ref{thm:uniformnystromerror}, like the main result of \cite{TR10}, promises exact recovery when $\mat{A}$ is exactly rank $k$ and has small coherence, with a sample of $\mathrm{O}(k \log k)$ columns. Unlike the result in \cite{TR10}, Theorem \ref{thm:uniformnystromerror} is applicable in the case that $\mat{A}$ is full-rank but has a sufficiently fastly decaying spectrum.
\end{remark}



\begin{remark}
We might ask where attempts at sharpening the analysis of the na\"ive Nystr\"om extension should be aimed: toward more refined linear algebra bounds on column selection (Proposition~\ref{prop:deterministicerrorbound}), or toward a deeper analysis of the randomness (Theorem \ref{thm:uniformnystromerror})?

 It is known that, for uniform sampling, the quantity $\norm{\mat{\Omega}_1^\dagger}_2^2$ remains $\Omega(n/\ell)$ once $\ell \gtrsim \tau k \log k$ \cite{R99}. Likewise, in the regime of low $\tau$ and $k \ll n$, $\mat{\Omega}_2$ is likely to be an almost isometric embedding. This follows from the fact that the columns of $\mat{\Omega}_2$ contain $n-k$ of the $n$ entries of the corresponding columns of $\mat{U}_2$, so they are likely to be almost orthogonal and linearly independent.  Together, these observations suggest that $\norm{\mat{\Omega}_2 \mat{\Omega}_1^\dagger}_2$ remains $\Omega(n/\ell)$ also. Thus, we expect that no bounds much sharper than Theorem \ref{thm:uniformnystromerror} on the error of the na\"ive Nystr\"om extension can be derived using the algebraic results in Proposition~\ref{prop:deterministicerrorbound} as the starting point. 
\end{remark}

To obtain Theorem \ref{thm:uniformnystromerror}, we use Theorem \ref{thm:colselection} in conjunction with a bound on $\norm{\mat{\Omega}_1^\dagger}_2^2$ provided by the following lemma. 

\begin{lemma}
Let $\mat{U}$ be an $n \times k$ matrix with orthonormal columns. Take $\tau$ to be the coherence of $\mat{U}, $
\[
 \tau = \mu_0(\mat{U}).
\]
Select $\varepsilon \in (0,1)$ and a nonzero failure probability $\delta.$ Let $\mat{S}$ be a random matrix distributed as the first $\ell$ columns of a uniformly random permutation matrix of size $n,$ where
\[
 \ell \geq \frac{2\tau}{(1-\varepsilon)^2}k\log\frac{k}{\delta}.
\]
Then with probability exceeding $1- \delta$, the matrix $\mat{U}^t\mat{S}$ has full row rank and satisfies
\[
 \norm{(\mat{U}^t \mat{S})^\dagger}_2^2 \leq \frac{n}{\varepsilon l}.
\]
 \label{lem:omega1normbound}
\end{lemma}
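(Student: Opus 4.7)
The plan is to translate the statement about $\norm{(\mat{U}^t\mat{S})^\dagger}_2^2$ into a statement about the least eigenvalue of a random matrix sum, and then apply a matrix Chernoff bound for sampling without replacement.

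First I would observe that, because $\mat{S}$ consists of the first $\ell$ columns of a uniform permutation, the matrix $\mat{U}^t \mat{S}$ has the columns $\vec{u}_{i_1}, \ldots, \vec{u}_{i_\ell}$ of $\mat{U}^t$ for a uniformly random subset $\{i_1, \ldots, i_\ell\}$ of $\{1, \ldots, n\}$ sampled without replacement. Here $\vec{u}_i^t$ is the $i$th row of $\mat{U}$. Provided $\mat{U}^t \mat{S}$ has full row rank,
\[
\norm{(\mat{U}^t \mat{S})^\dagger}_2^2 = \frac{1}{\lambdamin{\mat{U}^t \mat{S} \mat{S}^t \mat{U}}} = \frac{1}{\lambdamin{\mat{X}}}, \quad \text{where} \quad \mat{X} = \sum_{j=1}^\ell \vec{u}_{i_j} \vec{u}_{i_j}^t,
\]
so it suffices to show $\lambdamin{\mat{X}} \geq \varepsilon \ell / n$ with probability at least $1 - \delta$; this simultaneously certifies full row rank and the desired norm bound.

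Next I would record the two ingredients needed to feed $\mat{X}$ into a matrix Chernoff inequality. Each summand $\vec{u}_{i_j} \vec{u}_{i_j}^t$ is PSD with spectral norm $\|\vec{u}_{i_j}\|^2$, which by the definition of coherence is uniformly bounded by $\tau k / n$. The mean of the sum, by linearity and symmetry of the without-replacement scheme, is
\[
\E \mat{X} = \frac{\ell}{n} \sum_{i=1}^n \vec{u}_i \vec{u}_i^t = \frac{\ell}{n} \mat{U}^t \mat{U} = \frac{\ell}{n} \mat{I}_k,
\]
so $\lambdamin{\E\mat{X}} = \ell/n$.

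Now I would apply Tropp's matrix Chernoff bound for sampling a fixed-size subset without replacement, which states that for a sum $\mat{X}$ of PSD rank-one terms drawn uniformly without replacement from a fixed pool and with each summand having spectral norm at most $R$,
\[
\Prob{\lambdamin{\mat{X}} \leq (1-\eta)\, \lambdamin{\E \mat{X}}} \leq k \cdot \exp\!\left(-\frac{\eta^2\, \lambdamin{\E \mat{X}}}{2 R}\right)
\]
for $\eta \in (0,1)$. With $R = \tau k / n$, $\lambdamin{\E\mat{X}} = \ell/n$, and $\eta = 1 - \varepsilon$, the right-hand side reduces to $k \exp(-(1-\varepsilon)^2 \ell / (2 \tau k))$. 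A quick rearrangement shows that the hypothesis $\ell \geq 2 \tau k \log(k/\delta)/(1-\varepsilon)^2$ is exactly the condition that forces this probability below $\delta$, completing the proof.

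The only subtle point is justifying the use of the Chernoff tail bound under sampling without replacement rather than with replacement; I would handle this by citing Tropp's extension of the matrix Laplace transform method to negatively dependent/without-replacement sums (the tool explicitly advertised in the abstract), rather than reproving it here. Everything else is a routine matching of constants.
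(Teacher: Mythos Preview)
Your proposal is correct and follows essentially the same approach as the paper: reduce the pseudoinverse norm bound to a lower bound on $\lambda_k(\mat{U}^t\mat{S}\mat{S}^t\mat{U})$, express this as a sum of rank-one PSD matrices $\vec{u}_i\vec{u}_i^t$ sampled without replacement, compute the uniform spectral-norm bound $\tau k/n$ and the mean $\ell/n\cdot\mat{I}_k$, and apply Tropp's matrix Chernoff bound for sampling without replacement with deviation parameter $1-\varepsilon$. The constants and the final rearrangement to the condition on $\ell$ match the paper exactly.
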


We now proceed with the proof of Theorem \ref{thm:uniformnystromerror}.

\begin{proof}[Proof of Theorem \ref{thm:uniformnystromerror}]
Because we are using uniform sampling without replacement, the sampling matrix $\mat{S}$ is formed by taking the first $\ell$ columns of a uniformly sampled random permutation matrix. Note that by Lemma \ref{lem:omega1normbound}, $\mat{\Omega}_1$ has full row rank, so the bounds in Theorem \ref{thm:colselection} are applicable.

Applying Lemma \ref{lem:omega1normbound}, we see that $\norm{\mat{\Omega}_1^\dagger}_2^2 \leq \frac{n}{\varepsilon \ell}$ with probability exceeding $1-\delta.$ From Theorem \ref{thm:colselection}, we conclude that
 \[
  \|\mat{A} - \mat{C} \mat{W}^\dagger \mat{C}^t\|_2 \leq \norm{\mat{\Sigma}_2}_2 \left(1 + \norm{\mat{\Omega}_2}_2^2 \norm{\mat{\Omega}_1^\dagger}_2^2 \right) \leq \lambda_{k+1}(\mat{A})\left(1 + \frac{n}{\varepsilon \ell}\right)
 \]
with at least the same probability. To obtain the second inequality, we used the fact that $\norm{\mat{\Omega}_2}_2 \leq  \norm{\mat{U}_2}_2 \norm{\mat{\Omega}}_2 \leq 1.$

\end{proof}

One potential source of difficulty in the proof of Lemma \ref{lem:omega1normbound} is the fact that the columns are sampled without replacement, which introduces dependencies among the entries of the sampling matrix $\mat{S}.$ The following matrix Chernoff bound, a standard simplification of the lower Chernoff bound developed in \cite[Theorem 2.2]{T11}, allows us to gloss over these dependencies.

\begin{prop}
 Let $\mathcal{X}$ be a finite set of PSD matrices with dimension $k,$ and suppose that 
\[
 \max_{\mat{X} \in \mathcal{X}} \lambda_1(\mat{X}) \leq B. 
\]
 Sample $\{\mat{X}_1, \ldots, \mat{X}_\ell\}$ uniformly at random from $\mathcal{X}$ without replacement. Compute
\[
 \mu_{\text{min}} = \ell \cdot \lambda_k(\E \mat{X}_1).
\]

Then
\[
 \Prob{ \lambda_k\left(\sum_i \mat{X}_i \right) \leq \varepsilon\mu_{\text{min}} } \leq k \cdot \e^{-(1-\varepsilon)^2 \mu_{\text{min}}/(2B)} \quad \text{for } \varepsilon \in [0,1].
\]

 \label{prop:chernoffworeplacement}
\end{prop}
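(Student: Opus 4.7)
The plan is to invoke the matrix Chernoff lower tail bound for sampling without replacement from \cite{T11} and then reduce the resulting exponential factor to a Gaussian-type tail via a routine scalar inequality. Since the proposition is explicitly advertised as a simplification of Theorem 2.2 of \cite{T11}, the heavy lifting---the matrix Laplace transform method combined with a subadditivity argument valid for sums sampled without replacement---is delegated to the cited result; what remains is purely a cosmetic polish.

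First, I would restate Theorem 2.2 of \cite{T11} in the notation of the present proposition. With the hypotheses as given ($\mat{X}_i$ drawn uniformly without replacement from a finite collection of PSD matrices of dimension $k$ with $\lambda_1(\mat{X}) \leq B$, and $\mu_{\text{min}} = \ell \cdot \lambda_k(\E \mat{X}_1)$), that theorem yields
\begin{equation*}
\Prob{ \lambda_k\left(\sum_i \mat{X}_i\right) \leq \varepsilon \mu_{\text{min}} } \leq k \cdot \left[\frac{\e^{-(1-\varepsilon)}}{\varepsilon^{\varepsilon}}\right]^{\mu_{\text{min}}/B} \qquad \text{for } \varepsilon \in [0,1].
\end{equation*}

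Second, I would dispense with the awkward base via the scalar inequality
\begin{equation*}
\frac{\e^{-(1-\varepsilon)}}{\varepsilon^{\varepsilon}} \leq \e^{-(1-\varepsilon)^2/2}, \qquad \varepsilon \in [0,1],
\end{equation*}
proved by taking logarithms, substituting $u = 1 - \varepsilon$, and observing that $(1-u)\log(1-u) + u - u^2/2 = u^3/6 + u^4/12 + \cdots \geq 0$ via the Taylor expansion of $\log(1-u)$ (with the convention $0^0 = 1$ handling the boundary $\varepsilon = 0$). Raising both sides to the power $\mu_{\text{min}}/B$ and substituting into the previous display yields the stated bound.

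The entire argument is thus a citation followed by a one-line scalar simplification. The only real obstacle is bookkeeping: verifying that Tropp's theorem is invoked in exactly the right form (lower tail rather than upper tail, sampling without replacement, uniform $\lambda_1$ bound $B$, and $\mu_{\text{min}}$ defined via $\ell \cdot \lambda_k(\E \mat{X}_1)$ rather than $\lambda_k(\E \sum_i \mat{X}_i)$---these agree for identically distributed samples but the notational convention must be matched).
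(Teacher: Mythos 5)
Your proposal is correct and matches the paper's treatment exactly: the paper offers no proof of Proposition~\ref{prop:chernoffworeplacement} beyond calling it ``a standard simplification of the lower Chernoff bound developed in \cite[Theorem 2.2]{T11},'' and your argument---cite Tropp's lower tail for sampling without replacement, then replace the base $\e^{-(1-\varepsilon)}/\varepsilon^{\varepsilon}$ by $\e^{-(1-\varepsilon)^2/2}$ via the Taylor-series computation---is precisely that standard simplification, with the scalar inequality verified correctly. No gaps.
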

 
\begin{proof}[Proof of Lemma \ref{lem:omega1normbound}]
 Note that $\mat{U}^t \mat{S}$ has full row rank if $\lambda_k(\mat{U}^t \mat{S}\mat{S}^t \mat{U}) > 0.$ Furthermore,
\[
 \norm{(\mat{U}^t \mat{S})^\dagger}_2^2 = \lambda_k^{-1}(\mat{U}^t \mat{S} \mat{S}^t \mat{U}).
\]
Thus to obtain both conclusions of the lemma, it is sufficient to verify that 
\[
\lambda_k(\mat{U}^t \mat{S}\mat{S}^t \mat{U}) \geq \frac{\varepsilon \ell}{n}
\]
when $\ell$ is as stated.

We apply Proposition \ref{prop:chernoffworeplacement} to bound the probability that this inequality is not satisfied. Let $\vec{u}_i$ denote the $i$th column of $\mat{U}^t.$ Then \[
 \lambda_k(\mat{U}^t \mat{S}\mat{S}^t \mat{U}) = \lambda_k\left( \sum_{i=1}^\ell \mat{X}_i \right),
\]
where the $\mat{X}_i$ are chosen uniformly at random, without replacement, from the set $\mathcal{X} = \{ \vec{u}_i \vec{u}_i^t \}_{i=1,\ldots,n}.$ Clearly
\[
 B = \max_i \|\vec{u}_i\|^2 = \frac{k}{n}\tau \quad \text{ and } \quad  \mu_{\text{min}} = \ell \cdot\lambda_k(\E \mat{X}_1) = \frac{\ell}{n} \lambda_k(\mat{U}^t\mat{U}) = \frac{\ell}{n}.
\]
Proposition \ref{prop:chernoffworeplacement} yields
\[
 \Prob{\lambda_k\left(\mat{U}^t \mat{S}\mat{S}^t \mat{U} \right) \leq \varepsilon \frac{\ell}{n}} \leq k \cdot \e^{-(1-\varepsilon)^2 \ell/(2 k \tau)}.
\]

We require enough samples that 
\[
\lambda_k(\mat{U}^t \mat{S}\mat{S}^t \mat{U}) \geq \varepsilon \frac{\ell}{n}
\]
with probability greater than $1 - \delta$, so we set 
\[
 k \cdot \e^{-(1-\varepsilon)^2 \ell/(2 k \tau)} \leq \delta 
\]
and solve for $\ell,$ finding
\[
 \ell \geq \frac{2\tau}{(1-\varepsilon)^2} k\log \frac{k}{\delta}.
\]

Thus, for values of $\ell$ satisfying this inequality, we achieve the stated spectral error bound and ensure that $\mat{U}^t\mat{S}$ has full row rank.
\end{proof}

\bibliographystyle{amsalpha}
\bibliography{nystrom-method-a-new-analysis}
\end{document}